\documentclass[fleqn,twoside]{article}%
\usepackage{amsfonts}
\usepackage{espcrc1}
\usepackage{amsmath}
\usepackage{amssymb}
\usepackage{graphicx}%
\setcounter{MaxMatrixCols}{30}
\hyphenation{author another created financial paper re-commend-ed Post-Script}

\newtheorem{theorem}{Theorem}

\newtheorem{corollary}[theorem]{Corollary}

\newtheorem{definition}[theorem]{Definition}

\newtheorem{lemma}[theorem]{Lemma}

\newenvironment{proof}[1][Proof]{\noindent\textbf{#1.} }{\ \rule{0.5em}{0.5em}}
\begin{document}
%
\title{Delayed Langevin type equations with two fractional derivatives}%
%

\author{N. I . Mahmudov\\
Department of Mathematics,
Eastern Mediterranean University\\
Famagusta, T.R. North Cyprus, via Mersin 10, Turkey}
\date{}%
%

\maketitle
%

\begin{abstract}
In this paper, we introduce a delayed Mittag-Leffler type function. With the
help of the delayed Mittag-Leffler type functions, we give an explicit formula
of solutions to linear nonhomogeneous fractional time-delay Langevin equations
involving two Riemann-Liouville fractional derivatives. The existence and
uniqueness of solutions are obtained by using an estimation of delayed
Mittag-Leffler type functions in terms of exponential functions and a weighted
norm via fixed point theorems. Further, we present Ulam--Hyers stability
results.%
\end{abstract}%

\textbf{Keywords:} fractional Langevin type equation, Wright type function,
Mittag-Leffler type matrix function.

\section{Introduction}

One century ago, Langevin offered a detailed description of Brownian motion
due to collisions with much smaller fluid molecules. Many of the stochastic
problems in fluctuating media can be described by the Langevin equation
\cite{1}, \cite{2}. However, for some complex systems, the classical Langevin
equation cannot offer an adequate description of the problem. As a result,
various generalizations have been proposed that constitute the inadequacy of
the classical case and describe more physical phenomena in disordered regions
\cite{3}. One of them is so called fractional Langevin type equation, which is
obtained from the classical Langevin equation by replacing classical
derivative by fractional derivative. The nonlinear Langevin type equations
involving two fractional orders was introduced and investigated in
\cite{13}-\cite{18}.

Our goal is to study the following time-delay Langevin type equations with two
different fractional derivatives%
\begin{equation}
\left\{
\begin{tabular}
[c]{ll}%
$^{RL}D_{-h^{+}}^{\alpha}y\left(  t\right)  -\lambda\ ^{RL}D_{-h^{+}}^{\beta
}y\left(  t\right)  =\mu y\left(  t-h\right)  +f\left(  t,y\left(  t\right)
\right)  ,\ \ t\in\left(  0,T\right]  ,\ \ h>0,$ & \\
$y\left(  t\right)  =\varphi\left(  t\right)  ,\ \ \left(  I_{-h^{+}%
}^{2-\alpha}y\right)  \left(  -h^{+}\right)  =\varphi\left(  -h\right)
,\ \ -h\leq t\leq0,\ 0<\beta<1,\ 1<\alpha\leq2,\ \alpha-\beta>1,$ & \\
$^{RL}D_{-h^{+}}^{\alpha-1}y\left(  t\right)  =\ ^{RL}D_{-h^{+}}^{\alpha
-1}\varphi\left(  t\right)  ,$ $\ \ \ \ \left(  ^{RL}D_{-h+}^{\alpha
-1}y\right)  \left(  -h^{+}\right)  =\ \ ^{RL}D_{-h+}^{\alpha-1}\varphi\left(
-h\right)  ,$ &
\end{tabular}
\ \right.  \label{le2}%
\end{equation}
where $^{RL}D_{-h^{+}}^{\alpha}$ and $^{RL}D_{-h^{+}}^{\beta}$ are
Riemann-Liouville fractional derivatives of order $1<\alpha\leq2$ and
$0<\beta<1,$ $T=lh,$ $l\in\mathbb{N},$ $\varphi:\left[  -h,T\right]
\rightarrow\mathbb{R}$ is a two times continuously differentiable function.

The main contributions are as follows: (i) we introduce a delayed
Mittag-Leffler type function; (ii) we derive the representation of solutions
of nonhomogeneous equation (\ref{le2}); (iii) we give fundamental estimation
for delayed Mittag-Leffler type function in terms of exponential function;
(iv) we introduce a weighted norm (\ref{norm}) in $C(\left[  0,T\right]
,\mathbb{R}$) (the Banach space of all continuous functions from $\left[
0,T\right]  $ into $\mathbb{R}$ with the norm $\left\Vert x\right\Vert
_{\infty}=\sup\left\{  \left\vert x\left(  t\right)  \right\vert :t\in\left[
0,T\right]  \right\}  $) and establish sufficient conditions to guarantee the
existence and uniqueness of global solution on $\left[  0,T\right]  $ for
fractional time-delay Langevin type equation (\ref{le2}) and (v) we study the
Ulam-Hyers stability of (\ref{le2}) in a weighted space.

\section{Delayed Mittag-Leffler type function}

\begin{definition}
\label{def:01}\cite{kilbas} Mittag-Leffler type function of two parameters
$e_{\alpha,\beta}\left(  \lambda,\cdot\right)  :\mathbb{R}\rightarrow
\mathbb{R}$ is defined by%
\[
e_{\alpha,\beta}\left(  \lambda;t\right)  :=t^{\beta-1}E_{\alpha,\beta}\left(
\lambda t^{\alpha}\right)  :=t^{\beta-1}%
{\displaystyle\sum\limits_{k=0}^{\infty}}
\frac{\lambda^{k}t^{\alpha k}}{\Gamma\left(  k\alpha+\beta\right)
},\ \ \ \alpha,\beta>0,\ t\in\mathbb{R}.
\]

\end{definition}

\begin{definition}
\label{def:02}\cite{kilbas} Let $\lambda_{l},b_{j}\in C,$ $\alpha_{l}%
,\beta_{j}\in\mathbb{R},\ l=1,...,p;$ $j=1,...,q$. Generalized Wright function
$_{p}\Psi_{q}\left(  \cdot\right)  :C\rightarrow C$ is defined by%
\[
_{p}\Psi_{q}\left(  z\right)  =\ _{p}\Psi_{q}\left[  \left.
\begin{array}
[c]{c}%
\left(  \lambda_{l},\alpha_{l}\right)  _{1,p}\\
\left(  b_{l},\beta_{l}\right)  _{1,q}%
\end{array}
\right\vert z\right]  =%
{\displaystyle\sum_{k=0}^{\infty}}
\frac{%
{\displaystyle\prod_{l=1}^{p}}
\Gamma\left(  \lambda_{l}+\alpha_{l}k\right)  }{%
{\displaystyle\prod_{j=1}^{q}}
\Gamma\left(  b_{j}+\beta_{j}k\right)  }\frac{z^{k}}{k!}%
\]

\end{definition}

It is known that, see \cite{kilbas} Theorem 1.5, if $%
{\displaystyle\sum_{j=1}^{q}}
\beta_{j}-%
{\displaystyle\sum_{l=1}^{p}}
\alpha_{l}>-1$, then the series $_{p}\Psi_{q}\left(  z\right)  $ is absolutely
convergent for all $z\in C$.

The following function is defined in \cite{kilbas}
\[
G_{\alpha,\beta}\left(  \lambda,\mu;t\right)  =%
{\displaystyle\sum_{n=0}^{\infty}}
\frac{\lambda^{n}}{n!}t^{\alpha n}\ _{1}\Psi_{1}\left[  \left.
\begin{array}
[c]{c}%
\left(  n+1,1\right) \\
\left(  \alpha n+\alpha,\alpha-\beta\right)
\end{array}
\right\vert \mu t^{\alpha-\beta}\right]  =%
{\displaystyle\sum_{n=0}^{\infty}}
{\displaystyle\sum_{k=0}^{\infty}}
\left(
\begin{array}
[c]{c}%
n+k\\
k
\end{array}
\right)  \lambda^{n}\mu^{k}\frac{t^{\alpha n+k\left(  \alpha-\beta\right)  }%
}{\Gamma\left(  \alpha n+\left(  \alpha-\beta\right)  k+\alpha\right)  }.
\]

Motivated by this we introduce so called delayed Mittag-Leffler type function
generated by $\lambda,\mu$ as follows:

\begin{definition}
\label{def:03}Let $\alpha,\beta,\gamma>0.$ Delayed Mittag-Leffler type
function generated by $\lambda,\mu$ of two parameters $\mathfrak{E}%
_{\alpha,\beta}^{h,\gamma}\left(  \lambda,\mu;\cdot\right)  :\mathbb{R}%
\rightarrow\mathbb{R}$ is defined by%
\[
\mathfrak{E}_{\alpha,\beta}^{h,\gamma}\left(  \lambda,\mu;t\right)  =%
{\displaystyle\sum_{n=0}^{\infty}}
{\displaystyle\sum_{k=0}^{\infty}}
\left(
\begin{array}
[c]{c}%
n+k\\
k
\end{array}
\right)  \lambda^{n}\mu^{k}\dfrac{\left(  t-kh\right)  ^{k\gamma+n\alpha
+\beta-1}}{\Gamma\left(  k\gamma+n\alpha+\beta\right)  }H\left(  t-kh\right)
,
\]
where $H$ is the Heaviside function
\[
H\left(  t\right)  =\left\{
\begin{array}
[c]{c}%
1,\ \ t\geq0,\\
0,\ \ \ t<0.
\end{array}
\right.
\]

\end{definition}

\begin{definition}
\label{def:11}\cite{li1} Delayed Mittag-Leffler type function of two
parameters $E^{h,\alpha,\beta}\left(  \mu;\cdot\right)  :\mathbb{R}%
\rightarrow\mathbb{R}$ is defined by%
\begin{equation}
E^{h,\alpha,\beta}\left(  \mu;t\right)  :=\left\{
\begin{tabular}
[c]{ll}%
$\Theta,$ & $-\infty<t\leq-h,$\\
$I\frac{\left(  h+t\right)  ^{\beta-1}}{\Gamma\left(  \beta\right)  },$ &
$-h<t\leq0,$\\
$I\frac{\left(  h+t\right)  ^{\beta-1}}{\Gamma\left(  \beta\right)  }+\mu
\frac{t^{\alpha+\beta-1}}{\Gamma\left(  \alpha+\beta\right)  }+\mu^{2}%
\frac{\left(  t-h\right)  ^{2\alpha+\beta-1}}{\Gamma\left(  2\alpha
+\beta\right)  }+...+\mu^{k}\frac{\left(  t-\left(  k-1\right)  h\right)
^{k\alpha+\beta-1}}{\Gamma\left(  k\alpha+\beta\right)  },$ & $\left(
k-1\right)  h<t\leq kh.$%
\end{tabular}
\ \ \ \ \ \ \ \ \ \ \ \ \ \right.  \label{ml2}%
\end{equation}

\end{definition}

\begin{lemma}
We have

\begin{description}
\item[(i)] if $\mu=0$, then $\mathfrak{E}_{\alpha-\beta,\beta}^{h,\gamma
}\left(  \lambda,\mu;t\right)  =%
{\displaystyle\sum_{n=0}^{\infty}}
\lambda^{n}\dfrac{t^{n\left(  \alpha-\beta\right)  +\beta-1}}{\Gamma\left(
n\left(  \alpha-\beta\right)  +\beta\right)  }H\left(  t\right)  =t^{\beta
-1}E_{\alpha-\beta,\beta}\left(  \lambda;t\right)  =e_{\alpha-\beta,\beta
}\left(  \lambda;t\right)  .$

\item[(ii)] if $\lambda=0$, then $\mathfrak{E}_{\alpha,\beta}^{h,\gamma
}\left(  \lambda,\mu;t\right)  =%
{\displaystyle\sum_{k=0}^{\infty}}
\mu^{k}\dfrac{\left(  t-kh\right)  ^{k\gamma+\beta-1}}{\Gamma\left(
k\gamma+\beta\right)  }H\left(  t-kh\right)  =E^{h,\gamma,\beta}\left(
\mu;t-h\right)  .$
\end{description}
\end{lemma}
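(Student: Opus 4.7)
The lemma is essentially a direct verification from the series definition in Definition \ref{def:03}, so the plan is computational, and the only subtlety is handling the piecewise/Heaviside truncation in part (ii).

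For \textbf{part (i)}, the plan is to substitute $\mu=0$ into the double sum
\[
\mathfrak{E}_{\alpha-\beta,\beta}^{h,\gamma}(\lambda,0;t)=\sum_{n=0}^{\infty}\sum_{k=0}^{\infty}\binom{n+k}{k}\lambda^{n}(0)^{k}\frac{(t-kh)^{k\gamma+n(\alpha-\beta)+\beta-1}}{\Gamma(k\gamma+n(\alpha-\beta)+\beta)}H(t-kh).
\]
Only the $k=0$ term survives (using the convention $0^{0}=1$), collapsing the double sum to a single series in $n$. Since all surviving terms carry the factor $H(t)$, for $t\geq 0$ the Heaviside becomes $1$, and I recognise
\[
\sum_{n=0}^{\infty}\lambda^{n}\frac{t^{n(\alpha-\beta)+\beta-1}}{\Gamma(n(\alpha-\beta)+\beta)}=t^{\beta-1}\sum_{n=0}^{\infty}\frac{(\lambda t^{\alpha-\beta})^{n}}{\Gamma(n(\alpha-\beta)+\beta)}=t^{\beta-1}E_{\alpha-\beta,\beta}(\lambda t^{\alpha-\beta}),
\]
which by Definition \ref{def:01} is exactly $e_{\alpha-\beta,\beta}(\lambda;t)$. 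This is the whole argument for (i).

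For \textbf{part (ii)}, I would plug $\lambda=0$ into the same double sum; this time only the $n=0$ terms survive, giving
\[
\mathfrak{E}_{\alpha,\beta}^{h,\gamma}(0,\mu;t)=\sum_{k=0}^{\infty}\mu^{k}\frac{(t-kh)^{k\gamma+\beta-1}}{\Gamma(k\gamma+\beta)}H(t-kh).
\]
The main step is then to match this series, on each interval $kh< t\leq(k+1)h$, with the piecewise definition of $E^{h,\gamma,\beta}(\mu;t-h)$ from Definition \ref{def:11}. I would fix $k\in\mathbb{N}_{0}$, restrict to $kh< t\leq(k+1)h$, and observe that $H(t-jh)=1$ exactly for $j=0,1,\dots,k$ and $H(t-jh)=0$ for $j\geq k+1$, so the infinite sum truncates to
\[
\sum_{j=0}^{k}\mu^{j}\frac{(t-jh)^{j\gamma+\beta-1}}{\Gamma(j\gamma+\beta)}=\frac{t^{\beta-1}}{\Gamma(\beta)}+\mu\frac{(t-h)^{\gamma+\beta-1}}{\Gamma(\gamma+\beta)}+\cdots+\mu^{k}\frac{(t-kh)^{k\gamma+\beta-1}}{\Gamma(k\gamma+\beta)}.
\]
Then I would shift the argument of Definition \ref{def:11} by replacing $\alpha$ with $\gamma$ and $t$ with $t-h$: the condition $(k-1)h<s\leq kh$ with $s=t-h$ becomes $kh<t\leq(k+1)h$, so on this interval the definition gives the same finite sum, and the leading term $(h+s)^{\beta-1}/\Gamma(\beta)$ becomes $t^{\beta-1}/\Gamma(\beta)$, matching term by term. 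I would also handle the boundary case $-h<t-h\leq 0$ (that is, $0<t\leq h$), where $k=0$ and both sides reduce to $t^{\beta-1}/\Gamma(\beta)$, and the case $t\leq 0$, where the Heaviside kills every term and the other side is $\Theta=0$.

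The main obstacle is purely bookkeeping: getting the index shift $t\mapsto t-h$ correct so that the $k$th bin of the piecewise definition of $E^{h,\gamma,\beta}$ aligns with the interval $kh<t\leq(k+1)h$ coming from the Heaviside functions. There is no analytic difficulty, since convergence of each finite truncation is trivial and the $\mu=0$ reduction already appeared in the definition of $G_{\alpha,\beta}$ cited from \cite{kilbas}.
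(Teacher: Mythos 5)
The paper states this lemma without any proof, treating it as an immediate consequence of Definition \ref{def:03}; your verification supplies exactly that routine computation and is correct. Both key points — the collapse of the double sum (only the $k=0$ terms survive when $\mu=0$, only the $n=0$ terms when $\lambda=0$, matching Definition \ref{def:01}) and the index shift $t\mapsto t-h$ that aligns the Heaviside truncation on $kh<t\leq(k+1)h$ with the piecewise bins of Definition \ref{def:11} — are handled as the authors evidently intended.
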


\begin{lemma}
\label{Lem:2}$\mathfrak{E}_{\alpha-\beta,\alpha}^{h,\alpha-j}\left(
\lambda,\mu;t\right)  $, $j=0,1,$ satisfies the following equation%
\[
^{RL}D_{-h^{+}}^{\alpha}\mathfrak{E}_{\alpha-\beta,\alpha}^{h,\alpha-j}\left(
\lambda,\mu;t\right)  -\lambda\ ^{RL}D_{-h^{+}}^{\beta}\mathfrak{E}%
_{\alpha-\beta,\alpha}^{h,\alpha-j}\left(  \lambda,\mu;t\right)
=\mu\mathfrak{E}_{\alpha-\beta,\alpha}^{h,\alpha-j}\left(  \lambda
,\mu;t-h\right)  .
\]

\end{lemma}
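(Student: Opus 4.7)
The plan is to push both Riemann--Liouville derivatives inside the defining double series, collapse the two resulting sums via Pascal's identity, and match the result to the right-hand side after a shift of the $k$-index.

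First I would record the elementary power rule
\[
{}^{RL}D_{-h^{+}}^{\sigma}\!\left[\dfrac{(t-kh)^{\delta-1}}{\Gamma(\delta)}H(t-kh)\right]=\dfrac{(t-kh)^{\delta-\sigma-1}}{\Gamma(\delta-\sigma)}H(t-kh),\quad k\ge 0,\ \delta>0,
\]
with the convention $1/\Gamma(-m)=0$ for $m\in\mathbb{Z}_{\ge 0}$. This follows from a Beta-integral evaluation, once one observes that the Heaviside factor raises the effective lower limit from $-h$ to $kh$. Termwise application of $^{RL}D_{-h^{+}}^{\alpha}$ and $^{RL}D_{-h^{+}}^{\beta}$ to $\mathfrak{E}_{\alpha-\beta,\alpha}^{h,\alpha-j}$ is justified by absolute and uniform convergence of the defining series on compact subsets of $[-h,T]$ (a consequence of the growth of $1/\Gamma$ already encoded in the generalized-Wright estimate recalled after Definition \ref{def:02}). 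After the shift $n\mapsto n-1$ in the $-\lambda\,{}^{RL}D_{-h^{+}}^{\beta}$ contribution the two sums share a common factor $(t-kh)^{\Delta-1}/\Gamma(\Delta)$, and the Pascal identity
\[
\binom{n+k}{k}-\binom{n+k-1}{k}=\binom{n+k-1}{k-1}
\]
collapses them into a single double sum supported on $k\ge 1$ with coefficient $\binom{n+k-1}{k-1}\lambda^{n}\mu^{k}$. Shifting $k\mapsto k-1$ in the series for $\mu\,\mathfrak{E}_{\alpha-\beta,\alpha}^{h,\alpha-j}(\lambda,\mu;t-h)$ then produces the same coefficient, exponent, and gamma denominator, completing the identification.

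The main obstacle is the bookkeeping at the boundary of the index set: the $n=0$ row must be picked up correctly via $\binom{k-1}{k-1}=1$, and the isolated $n=k=0$ term in the $^{RL}D_{-h^{+}}^{\alpha}$ sum must be killed by the convention $1/\Gamma(0)=0$. The cases $j=0$ and $j=1$ are handled uniformly, since $j$ enters only as an additive shift of the exponents and leaves the re-indexing machinery unaffected.
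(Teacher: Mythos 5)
Your argument is correct and essentially identical to the paper's own proof: termwise Riemann--Liouville differentiation of the defining double series, the Pascal identity $\binom{n+k}{k}=\binom{n+k-1}{k}+\binom{n+k-1}{k-1}$, and the index shifts $n\mapsto n-1$ and $k\mapsto k-1$, with the isolated $n=k=0$ term annihilated by the convention $1/\Gamma(0)=0$. The paper writes out only the case $j=0$, and your closing remark that $j$ enters merely as an additive shift of the exponents disposes of $j=1$ by the same computation.
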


\begin{proof}
We prove lemma just for $j=0$. Direct calculations show that%
\begin{align*}
\mathfrak{E}_{\alpha-\beta,\alpha}^{h,\alpha}\left(  \lambda,\mu;t\right)   &
=%
{\displaystyle\sum_{n=0}^{\infty}}
{\displaystyle\sum_{k=0}^{\infty}}
\left(
\begin{array}
[c]{c}%
n+k\\
k
\end{array}
\right)  \lambda^{n}\mu^{k}\dfrac{\left(  t-kh\right)  ^{k\alpha+n\left(
\alpha-\beta\right)  +\alpha-1}}{\Gamma\left(  k\alpha+n\left(  \alpha
-\beta\right)  +\alpha\right)  }H\left(  t-kh\right)  ,\\
^{RL}D_{-h^{+}}^{\beta}\mathfrak{E}_{\alpha-\beta,\alpha}^{h,\alpha}\left(
\lambda,\mu;t\right)   &  =%
{\displaystyle\sum_{n=0}^{\infty}}
{\displaystyle\sum_{k=0}^{\infty}}
\left(
\begin{array}
[c]{c}%
n+k\\
k
\end{array}
\right)  \lambda^{n}\mu^{k}\dfrac{\left(  t-kh\right)  ^{k\alpha+n\left(
\alpha-\beta\right)  +\alpha-\beta-1}}{\Gamma\left(  k\alpha+n\left(
\alpha-\beta\right)  +\alpha-\beta\right)  }H\left(  t-kh\right)  ,\\
^{RL}D_{-h^{+}}^{\alpha}\mathfrak{E}_{\alpha-\beta,\alpha}^{h,\alpha}\left(
\lambda,\mu;t\right)   &  =%
{\displaystyle\sum_{n=1}^{\infty}}
\lambda^{n}\dfrac{t^{n\left(  \alpha-\beta\right)  -1}}{\Gamma\left(  n\left(
\alpha-\beta\right)  \right)  }+%
{\displaystyle\sum_{n=0}^{\infty}}
{\displaystyle\sum_{k=1}^{\infty}}
\left(
\begin{array}
[c]{c}%
n+k\\
k
\end{array}
\right)  \lambda^{n}\mu^{k}\dfrac{\left(  t-kh\right)  ^{k\alpha+n\left(
\alpha-\beta\right)  -1}}{\Gamma\left(  k\alpha+n\left(  \alpha-\beta\right)
\right)  }H\left(  t-kh\right)  .
\end{align*}
Using the binomial identity $\left(
\begin{array}
[c]{c}%
n+k\\
k
\end{array}
\right)  =\left(
\begin{array}
[c]{c}%
n+k-1\\
k
\end{array}
\right)  +\left(
\begin{array}
[c]{c}%
n+k-1\\
k-1
\end{array}
\right)  $ we get%
\begin{align*}
&  ^{RL}D_{-h^{+}}^{\alpha}\mathfrak{E}_{\alpha-\beta,\alpha}^{h,\alpha
}\left(  \lambda,\mu;t\right) \\
&  =%
{\displaystyle\sum_{n=1}^{\infty}}
\lambda^{n}\dfrac{t^{n\left(  \alpha-\beta\right)  -1}}{\Gamma\left(  n\left(
\alpha-\beta\right)  \right)  }+%
{\displaystyle\sum_{n=0}^{\infty}}
{\displaystyle\sum_{k=1}^{\infty}}
\left(
\begin{array}
[c]{c}%
n+k-1\\
k
\end{array}
\right)  \lambda^{n}\mu^{k}\dfrac{\left(  t-kh\right)  ^{k\alpha+n\left(
\alpha-\beta\right)  -1}}{\Gamma\left(  k\alpha+n\left(  \alpha-\beta\right)
\right)  }H\left(  t-kh\right) \\
&  +%
{\displaystyle\sum_{n=0}^{\infty}}
{\displaystyle\sum_{k=1}^{\infty}}
\left(
\begin{array}
[c]{c}%
n+k-1\\
k-1
\end{array}
\right)  \lambda^{n}\mu^{k}\dfrac{\left(  t-kh\right)  ^{k\alpha+n\left(
\alpha-\beta\right)  -1}}{\Gamma\left(  k\alpha+n\left(  \alpha-\beta\right)
\right)  }H\left(  t-kh\right) \\
&  =\lambda%
{\displaystyle\sum_{n=0}^{\infty}}
\lambda^{n}\dfrac{t^{n\left(  \alpha-\beta\right)  +\alpha-\beta-1}}%
{\Gamma\left(  n\left(  \alpha-\beta\right)  +\alpha-\beta\right)  }+%
{\displaystyle\sum_{n=0}^{\infty}}
{\displaystyle\sum_{k=1}^{\infty}}
\left(
\begin{array}
[c]{c}%
n+k\\
k
\end{array}
\right)  \lambda^{n+1}\mu^{k}\dfrac{\left(  t-kh\right)  ^{k\alpha+n\left(
\alpha-\beta\right)  +\alpha-\beta-1}}{\Gamma\left(  k\alpha+n\left(
\alpha-\beta\right)  +\alpha-\beta\right)  }H\left(  t-kh\right)  +\\
&  +%
{\displaystyle\sum_{n=0}^{\infty}}
{\displaystyle\sum_{k=0}^{\infty}}
\left(
\begin{array}
[c]{c}%
n+k\\
k
\end{array}
\right)  \lambda^{n}\mu^{k+1}\dfrac{\left(  t-h-kh\right)  ^{k\alpha+n\left(
\alpha-\beta\right)  +\alpha-1}}{\Gamma\left(  k\alpha+n\left(  \alpha
-\beta\right)  +\alpha\right)  }H\left(  t-h-kh\right) \\
&  =\lambda\ ^{RL}D_{-h^{+}}^{\beta}\mathfrak{E}_{\alpha-\beta,\alpha
}^{h,\alpha}\left(  \lambda,\mu;t\right)  +\mu\mathfrak{E}_{\alpha
-\beta,\alpha}^{h,\alpha}\left(  \lambda,\mu;t-h\right)  .
\end{align*}
Lemma is proved.
\end{proof}

\begin{theorem}
\label{thm:2} A solution $y\in C\left(  \left(  \left(  p-1\right)
h,ph\right]  ,\mathbb{R}\right)  ,$ $0\leq p\leq l,$ of (\ref{le2}) with $f=0$
has a form%
\begin{align*}
y\left(  t\right)   &  =\mathfrak{E}_{\alpha-\beta,\alpha}^{h,\alpha}\left(
\lambda,\mu;t+h\right)  \ ^{RL}D_{-h^{+}}^{\alpha-1}\varphi\left(  -h\right)
+\mathfrak{E}_{\alpha-\beta,\alpha}^{h,\alpha-1}\left(  \lambda,\mu
;t+h\right)  \left(  I_{-h^{+}}^{2-\alpha}\varphi\right)  \left(
-h^{+}\right) \\
&  +\int_{-h}^{t}\mathfrak{E}_{\alpha-\beta,\alpha}^{h,\alpha}\left(
\lambda,\mu;t-s\right)  \left(  \ ^{RL}D_{-h^{+}}^{\alpha}\varphi\left(
s\right)  -\lambda\ ^{RL}D_{-h^{+}}^{\beta}\varphi\left(  s\right)  \right)
ds.
\end{align*}

\end{theorem}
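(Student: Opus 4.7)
The plan is to verify the representation by direct substitution, splitting $y = y_H + y_P$ where $y_H$ is the linear combination of the two fundamental solutions and $y_P$ is the Duhamel integral, and checking separately that (a) $y$ satisfies the equation on $(0,T]$ and (b) the prescribed initial data at $-h^{+}$ are met.

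For the homogeneous piece, set $u_j(t) := \mathfrak{E}_{\alpha-\beta,\alpha}^{h,\alpha-j}(\lambda,\mu;t+h)$ for $j=0,1$. Every term of $\mathfrak{E}_{\alpha-\beta,\alpha}^{h,\alpha-j}(\lambda,\mu;\tau)$ in Definition~\ref{def:03} carries a Heaviside factor $H(\tau-kh)$ with $k\geq 0$, so the function is identically zero for $\tau<0$. The substitution $\tau = t+h$ in the convolution integrals defining ${}^{RL}D_{-h^{+}}^{\alpha}$ and ${}^{RL}D_{-h^{+}}^{\beta}$ therefore lets us transfer the lower limit from $-h^{+}$ to $0^{+}$, and Lemma~\ref{Lem:2} applied at the shifted argument yields
\[
{}^{RL}D_{-h^{+}}^{\alpha}u_j(t) - \lambda\,{}^{RL}D_{-h^{+}}^{\beta}u_j(t) = \mu\,u_j(t-h), \qquad j=0,1,
\]
so $y_H$ satisfies the homogeneous delay equation.

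Next I would handle the integral piece
\[
y_P(t) := \int_{-h}^{t}\mathfrak{E}_{\alpha-\beta,\alpha}^{h,\alpha}(\lambda,\mu;t-s)\,g(s)\,ds, \qquad g(s) := {}^{RL}D_{-h^{+}}^{\alpha}\varphi(s) - \lambda\,{}^{RL}D_{-h^{+}}^{\beta}\varphi(s).
\]
Swapping the order of integration in ${}^{RL}D_{-h^{+}}^{\alpha}y_P(t) = \Gamma(2-\alpha)^{-1}\,\partial_t^{2}\int_{-h}^{t}(t-\tau)^{1-\alpha}y_P(\tau)\,d\tau$ rewrites the inner double integral as $\int_{-h}^{t}g(s)\,(I_{0^{+}}^{2-\alpha}\mathfrak{E}_{\alpha-\beta,\alpha}^{h,\alpha})(t-s)\,ds$. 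The leading asymptotics $\mathfrak{E}_{\alpha-\beta,\alpha}^{h,\alpha}(\lambda,\mu;v)\sim v^{\alpha-1}/\Gamma(\alpha)$ as $v\to 0^{+}$ give $(I_{0^{+}}^{2-\alpha}\mathfrak{E})(0^{+})=0$ and $\partial_v(I_{0^{+}}^{2-\alpha}\mathfrak{E})(0^{+})=1$, so two $t$-differentiations under the integral produce a clean boundary forcing $g(t)$ together with a volume integral $\int_{-h}^{t}g(s)\,{}^{RL}D_{0^{+}}^{\alpha}\mathfrak{E}(\lambda,\mu;t-s)\,ds$; a similar, single-derivative calculation for ${}^{RL}D_{-h^{+}}^{\beta}y_P$ produces no boundary term because $(I_{0^{+}}^{1-\beta}\mathfrak{E})(0^{+})=0$. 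Subtracting $\lambda$ times the latter from the former and invoking Lemma~\ref{Lem:2} pointwise inside the integral converts $(D^{\alpha}-\lambda D^{\beta})\mathfrak{E}$ into $\mu\mathfrak{E}(\cdot-h)$ and gives
\[
({}^{RL}D_{-h^{+}}^{\alpha}-\lambda\,{}^{RL}D_{-h^{+}}^{\beta})y_P(t) = g(t) + \mu\,y_P(t-h).
\]
Adding the two identities produces the full equation for $y$, with the forcing $g(t)$ matching precisely the contribution of the history $\varphi$ through the Riemann--Liouville derivatives.

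Finally, the initial conditions at $-h^{+}$ follow from the leading terms of the series in Definition~\ref{def:03}: $u_0$ contributes a unit to ${}^{RL}D_{-h^{+}}^{\alpha-1}$ and $0$ to $I_{-h^{+}}^{2-\alpha}$, while $u_1$ does the reverse, and both functionals annihilate $y_P$ at $-h^{+}$ by the smoothness of the convolution kernel near the boundary. The principal obstacle lies in the Duhamel computation: the second $t$-derivative of a double integral with moving upper limit and a convolution kernel in $t-s$ produces several candidate boundary terms at $v=0^{+}$, and isolating the clean identity above relies sensitively on the precise asymptotics of $I_{0^{+}}^{2-\alpha}\mathfrak{E}$ near the origin; the hypotheses $1<\alpha\leq 2$ and $\alpha-\beta>1$ ensure that all intermediate integrals converge classically and that the boundary terms involving $I_{0^{+}}^{1-\beta}\mathfrak{E}$ vanish.
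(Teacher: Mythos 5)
Your strategy runs in the opposite direction from the paper's: you try to \emph{verify} the stated formula by direct substitution, whereas the paper posits the ansatz $y(t)=\mathfrak{E}_{\alpha-\beta,\alpha}^{h,\alpha}(\lambda,\mu;t+h)\,c_{1}+\mathfrak{E}_{\alpha-\beta,\alpha}^{h,\alpha-1}(\lambda,\mu;t+h)\,c_{2}+\int_{-h}^{t}\mathfrak{E}_{\alpha-\beta,\alpha}^{h,\alpha}(\lambda,\mu;t-s)g(s)\,ds$ with $c_{1},c_{2},g$ unknown and then determines them: $c_{1},c_{2}$ from the two point conditions at $-h^{+}$ (which you do check correctly from the leading terms of the series), and $g$ by applying $^{RL}D_{-h^{+}}^{\alpha}$ to the ansatz on the initialization interval $-h\leq t\leq 0$ and forcing agreement with $^{RL}D_{-h^{+}}^{\alpha}\varphi$ there. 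That last step is the heart of the paper's proof and is absent from yours: problem (\ref{le2}) prescribes $y(t)=\varphi(t)$ on all of $[-h,0]$, not merely the two functionals at $-h^{+}$, and it is exactly this requirement that identifies $g(t)=\,^{RL}D_{-h^{+}}^{\alpha}\varphi(t)-\lambda\,^{RL}D_{-h^{+}}^{\beta}\varphi(t)$. Your write-up never verifies consistency with the history on $[-h,0]$ and never explains where the integrand of the third term comes from.

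The second, more serious gap is in your final assembly. Adding your two identities gives, for $t\in(0,T]$, $\bigl(^{RL}D_{-h^{+}}^{\alpha}-\lambda\,^{RL}D_{-h^{+}}^{\beta}\bigr)y(t)=\mu\,(y_{H}+y_{P})(t-h)+g(t)$; since $(y_{H}+y_{P})(t-h)$ coincides with $y(t-h)$ (by the representation itself when $t-h>0$, and because the representation must reproduce $\varphi$ on $[-h,0]$ when $t-h\leq 0$), you are left with $\mu y(t-h)+g(t)$ instead of $\mu y(t-h)$. The homogeneous equation carries no forcing, and $g(t)=\,^{RL}D_{-h^{+}}^{\alpha}\varphi(t)-\lambda\,^{RL}D_{-h^{+}}^{\beta}\varphi(t)$ does not vanish for $t>0$ in general ($\varphi$ is defined on all of $[-h,T]$). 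Your closing sentence that $g(t)$ ``matches the contribution of the history'' is not an argument: the history already enters $^{RL}D_{-h^{+}}^{\alpha}y$ through the lower terminal $-h$, so there is nothing left for $g(t)$ to cancel against. To complete a verification along your lines you would have to either show this residual term disappears (it does not as written) or read the convolution as extending only over $s\in[-h,\min(t,0)]$; the paper sidesteps the issue because its computation is carried out only on $[-h,0]$, where the delayed terms vanish. As it stands, your argument does not establish the theorem.
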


\begin{proof}
We are looking for a solution of the form%
\begin{align}
y\left(  t\right)   &  =\mathfrak{E}_{\alpha-\beta,\alpha}^{h,\alpha}\left(
\lambda,\mu;t+h\right)  c_{1}+\mathfrak{E}_{\alpha-\beta,\alpha}^{h,\alpha
-1}\left(  \lambda,\mu;t+h\right)  c_{2}+\int_{-h}^{t}\mathfrak{E}%
_{\alpha-\beta,\alpha}^{h,\alpha}\left(  \lambda,\mu;t-s\right)  g\left(
s\right)  ds,\nonumber\\
y\left(  t\right)   &  =\varphi\left(  t\right)  ,\ \ \left(  I_{-h^{+}%
}^{2-\alpha}y\right)  \left(  -h^{+}\right)  =\varphi\left(  -h\right)
,\ \ -h\leq t\leq0,\ \label{rep1}\\
^{RL}D_{-h^{+}}^{\alpha-1}y\left(  t\right)   &  =\ ^{RL}D_{-h^{+}}^{\alpha
-1}\varphi\left(  t\right)  ,\ \ \ \ \left(  ^{RL}D_{-h+}^{\alpha-1}y\right)
\left(  -h^{+}\right)  =\ \ ^{RL}D_{-h+}^{\alpha-1}\varphi\left(  -h\right)
,\nonumber
\end{align}
where $c_{1}$ and $c_{2}$ are unknown constants, $g(t)$ is an unknown
continuously differentiable function. Direct calculations and Lemma
\ref{Lem:2} show that%
\begin{gather*}
^{RL}D_{-h^{+}}^{\alpha-j}\left(  \int_{-h}^{\cdot}\mathfrak{E}_{\alpha
-\beta,\alpha}^{h,\alpha}\left(  \lambda,\mu;\cdot-s\right)  g\left(
s\right)  ds\right)  \left(  -h^{+}\right)  =0,\ \ j=0,1,\\
^{RL}D_{-h^{+}}^{\alpha}\left(  \int_{-h}^{\cdot}\mathfrak{E}_{\alpha
-\beta,\alpha}^{h,\alpha}\left(  \lambda,\mu;\cdot-s\right)  g\left(
s\right)  ds\right)  \left(  t\right)  =\lambda\int_{-h}^{t}\mathfrak{E}%
_{\alpha-\beta,\alpha}^{h,\alpha}\left(  \lambda,\mu;t-s\right)  g\left(
s\right)  ds+g\left(  t\right)  ,\\
^{RL}D_{-h^{+}}^{\alpha}\mathfrak{E}_{\alpha-\beta,\alpha}^{h,\alpha-j}\left(
\lambda,\mu;t+h\right)  =\lambda\ ^{RL}D_{-h^{+}}^{\beta}\mathfrak{E}%
_{\alpha-\beta,\alpha}^{h,\alpha-j}\left(  \lambda,\mu;t+h\right)  ,\ \ j=0,1.
\end{gather*}
Taking Riemann-Liouville derivative $^{RL}D_{-h^{+}}^{\alpha-2}y\left(
-h^{+}\right)  $ and $^{RL}D_{-h^{+}}^{\alpha-1}y\left(  -h^{+}\right)  $ of
$y\left(  t\right)  $ at $-h$ we get
\begin{align*}
^{RL}D_{-h^{+}}^{\alpha-2}y\left(  -h^{+}\right)   &  =c_{2}=\left(
I_{-h^{+}}^{2-\alpha}\varphi\right)  \left(  -h^{+}\right)  ,\\
^{RL}D_{-h^{+}}^{\alpha-1}y\left(  -h^{+}\right)   &  =c_{1}=\ ^{RL}D_{-h^{+}%
}^{\alpha-1}\varphi\left(  -h^{+}\right)
\end{align*}
On the other hand differentiating (\ref{rep1}), we obtain%
\begin{align*}
\ \theta^{RL}D_{-h^{+}}^{\alpha}\varphi\left(  t\right)   &  =\ ^{RL}%
D_{-h^{+}}^{\alpha}y\left(  t\right)  =\lambda\ ^{RL}D_{-h^{+}}^{\beta
}\mathfrak{E}_{\alpha-\beta,\alpha}^{h,\alpha}\left(  \lambda,\mu;t+h\right)
c_{1}\\
&  +\lambda\ ^{RL}D_{-h^{+}}^{\beta}\mathfrak{E}_{\alpha-\beta,\alpha
}^{h,\alpha-1}\left(  \lambda,\mu;t+h\right)  c_{2}\\
&  +\lambda\int_{-h}^{t}\ ^{RL}D_{-h^{+}}^{\beta}\mathfrak{E}_{\alpha
-\beta,\alpha}^{h,\alpha}\left(  \lambda,\mu;t-s\right)  g\left(  s\right)
ds+g\left(  t\right)  ,\ \ \ -h\leq t\leq0.
\end{align*}
Therefore, $g\left(  t\right)  =\ ^{RL}D_{-h^{+}}^{\alpha}\varphi\left(
t\right)  -\lambda\ ^{RL}D_{-h^{+}}^{\beta}\varphi\left(  t\right)  $ and the
desired result holds.
\end{proof}

\begin{theorem}
\label{thm:11}The solution $y(t)$ of (\ref{le2}) satisfying zero initial
condition $y\left(  t\right)  =0,\ -h\leq t\leq0,$ has a form%
\[
y\left(  t\right)  =\int_{0}^{t}\mathfrak{E}_{\alpha-\beta,\alpha}^{h,\alpha
}\left(  \lambda,\mu;t-s\right)  f\left(  s\right)  ds,\ \ t\geq0.
\]

\end{theorem}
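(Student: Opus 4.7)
The plan is to verify by direct substitution that $y(t):=\int_{0}^{t}\mathfrak{E}_{\alpha-\beta,\alpha}^{h,\alpha}(\lambda,\mu;t-s)f(s)\,ds$, extended by $y\equiv 0$ on $[-h,0]$, solves (\ref{le2}) with $\varphi\equiv 0$. The zero initial data hold immediately: $y\equiv 0$ on $[-h,0]$ by construction, while $(I_{-h^+}^{2-\alpha}y)(-h^+)$ and $({}^{RL}D_{-h^+}^{\alpha-1}y)(-h^+)$ both vanish because their defining integrals collapse to $[-h,-h]$ as $t\to -h^+$.

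Since $y$ vanishes on $[-h,0]$ and $\mathfrak{E}_{\alpha-\beta,\alpha}^{h,\alpha}(\lambda,\mu;u)=0$ for $u<0$ (via the Heaviside factor), the operators ${}^{RL}D_{-h^+}^{\alpha}$ and ${}^{RL}D_{-h^+}^{\beta}$ acting on either $y$ or $\mathfrak{E}$ coincide with the corresponding operators based at $0$ for every $t>0$. The task thus reduces to computing ${}^{RL}D_0^{\alpha}(\mathfrak{E}*f)$ and ${}^{RL}D_0^{\beta}(\mathfrak{E}*f)$ for the Laplace convolution $*$. Using that $I_0^{\sigma}$ commutes with convolution and iterating the identity $(u*f)'(t)=u(0)f(t)+(u'*f)(t)$, I would read off the relevant surface terms from the expansion of $\mathfrak{E}$ near $t=0$: only the $n=k=0$ summand $t^{\alpha-1}/\Gamma(\alpha)$ contributes to the first two Taylor coefficients, all other terms vanishing to order strictly greater than $1$ in $t$ thanks to $\alpha-\beta>1$. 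This gives $(I_0^{2-\alpha}\mathfrak{E})(0)=0$, $(I_0^{2-\alpha}\mathfrak{E})'(0)=1$, and $(I_0^{1-\beta}\mathfrak{E})(0)=0$, and hence
\[
{}^{RL}D_0^{\alpha}(\mathfrak{E}*f)(t)=f(t)+({}^{RL}D_0^{\alpha}\mathfrak{E}*f)(t),\qquad {}^{RL}D_0^{\beta}(\mathfrak{E}*f)(t)=({}^{RL}D_0^{\beta}\mathfrak{E}*f)(t).
\]

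Subtracting $\lambda$ times the second identity from the first and invoking Lemma~\ref{Lem:2}, which states $({}^{RL}D^{\alpha}-\lambda\,{}^{RL}D^{\beta})\mathfrak{E}(t)=\mu\,\mathfrak{E}(t-h)$, yields
\[
{}^{RL}D_{-h^+}^{\alpha}y(t)-\lambda\,{}^{RL}D_{-h^+}^{\beta}y(t)=f(t)+\mu\!\int_{0}^{t}\mathfrak{E}(t-h-s)f(s)\,ds.
\]
For $t\le h$ the Heaviside factor inside $\mathfrak{E}(t-h-s)$ makes the integrand vanish identically and the right-hand side equals $f(t)+\mu y(t-h)$ since $y(t-h)=0$; for $t>h$ the support condition collapses the integral to $\int_{0}^{t-h}\mathfrak{E}((t-h)-s)f(s)\,ds=y(t-h)$, and the same identity holds. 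Combined with the already-verified initial data, this shows $y$ satisfies (\ref{le2}).

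The principal obstacle is the sharp identification of the surface term $(I_0^{2-\alpha}\mathfrak{E})'(0)=1$ that produces the inhomogeneity $f(t)$; here the hypothesis $\alpha-\beta>1$ is essential, as it isolates the $n=k=0$ summand of the double series as the unique contributor. Term-by-term differentiation is justified on each interval $((p-1)h,ph]$, where the outer sum over $k$ is finite and the absolute convergence of the inner Wright-type series (recorded after Definition~\ref{def:02}) lets one exchange sum and derivative without further comment.
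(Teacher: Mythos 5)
Your proof is correct and takes essentially the same approach as the paper, whose entire argument is the one-line assertion that applying ${}^{RL}D_{-h^{+}}^{\alpha}$ to the convolution yields the result; you supply the details the paper omits (the surface terms producing the inhomogeneity $f(t)$, the reduction to Lemma~\ref{Lem:2}, and the Heaviside bookkeeping that turns $\mu\,\mathfrak{E}(\cdot-h)*f$ into $\mu\,y(t-h)$). One minor quibble: the surface-term computation only requires $\alpha-\beta>0$, so the hypothesis $\alpha-\beta>1$ is not strictly ``essential'' at that step, though it is assumed throughout the paper.
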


\begin{proof}
Taking the fractional derivative $\ ^{RL}D_{-h^{+}}^{\alpha}$ of $\int_{0}%
^{t}\mathfrak{E}_{\alpha-\beta,\alpha}^{h,\alpha}\left(  \lambda
,\mu;t-s\right)  f\left(  s\right)  ds$ we can easily get the result.
\end{proof}

Combining Theorems \ref{thm:2} and \ref{thm:11}, we have the following result.

\begin{corollary}
A solution $y\in C\left(  \left[  -h,T\right]  \cap\left(  \left(  p-1\right)
h,ph\right]  ,\mathbb{R}\right)  ,$ $p\in\mathbb{N},$ of (\ref{le2}) has a
form%
\begin{align*}
y\left(  t\right)   &  =\mathfrak{E}_{\alpha-\beta,\alpha}^{h,\alpha}\left(
\lambda,\mu;t+h\right)  \ ^{RL}D_{-h^{+}}^{\alpha-1}\varphi\left(  -h\right)
+\mathfrak{E}_{\alpha-\beta,\alpha}^{h,\alpha-1}\left(  \lambda,\mu
;t+h\right)  \left(  I_{-h^{+}}^{2-\alpha}\varphi\right)  \left(
-h^{+}\right) \\
&  +\int_{-h}^{t}\mathfrak{E}_{\alpha-\beta,\alpha}^{h,\alpha}\left(
\lambda,\mu;t-s\right)  \left(  \ ^{RL}D_{-h^{+}}^{\alpha}\varphi\left(
s\right)  -\lambda\ ^{RL}D_{-h^{+}}^{\beta}\varphi\left(  s\right)  \right)
ds+\int_{0}^{t}\mathfrak{E}_{\alpha-\beta,\alpha}^{h,\alpha}\left(
\lambda,\mu;t-s\right)  f\left(  s\right)  ds.
\end{align*}

\end{corollary}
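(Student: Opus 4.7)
The natural plan is a linearity/superposition argument, since the left-hand side of (\ref{le2}) is a linear combination of Riemann--Liouville derivatives and the delay term $\mu y(t-h)$ is also linear. All of the heavy analytic work has been done in Theorems \ref{thm:2} and \ref{thm:11}; the corollary should follow simply by splitting the solution.

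Concretely, I would write $y=y_{1}+y_{2}$, where $y_{1}$ is the solution of the homogeneous equation $^{RL}D_{-h^{+}}^{\alpha}y_{1}-\lambda\,^{RL}D_{-h^{+}}^{\beta}y_{1}=\mu y_{1}(\cdot-h)$ carrying all of the initial/history data, namely $y_{1}(t)=\varphi(t)$ on $[-h,0]$, $(I_{-h^{+}}^{2-\alpha}y_{1})(-h^{+})=\varphi(-h)$, and $^{RL}D_{-h^{+}}^{\alpha-1}y_{1}(-h^{+})=\,^{RL}D_{-h^{+}}^{\alpha-1}\varphi(-h)$, while $y_{2}$ solves the nonhomogeneous equation $^{RL}D_{-h^{+}}^{\alpha}y_{2}-\lambda\,^{RL}D_{-h^{+}}^{\beta}y_{2}=\mu y_{2}(\cdot-h)+f(\cdot,y(\cdot))$ with zero history $y_{2}\equiv 0$ on $[-h,0]$ and zero values for $(I_{-h^{+}}^{2-\alpha}y_{2})(-h^{+})$ and $^{RL}D_{-h^{+}}^{\alpha-1}y_{2}(-h^{+})$. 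Applying Theorem \ref{thm:2} to $y_{1}$ reproduces the first three terms of the claimed formula, and applying Theorem \ref{thm:11} to $y_{2}$, with the driving term taken to be the known function $s\mapsto f(s,y(s))$, yields the last integral. Adding these two representations gives the asserted form.

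To close the argument one has to verify that $y=y_{1}+y_{2}$ really satisfies every condition in (\ref{le2}). This follows by linearity of the operators $^{RL}D_{-h^{+}}^{\alpha}$, $^{RL}D_{-h^{+}}^{\beta}$, $I_{-h^{+}}^{2-\alpha}$ and $^{RL}D_{-h^{+}}^{\alpha-1}$, combined with the identities
\[
^{RL}D_{-h^{+}}^{\alpha-j}\!\left(\int_{-h}^{\cdot}\mathfrak{E}_{\alpha-\beta,\alpha}^{h,\alpha}(\lambda,\mu;\cdot-s)g(s)\,ds\right)\!(-h^{+})=0,\qquad j=0,1,
\]
already established in the proof of Theorem \ref{thm:2}, which ensure that the particular part $y_{2}$ contributes nothing to the auxiliary data at $-h^{+}$.

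The only mild subtlety, and the step I would flag as the main obstacle, is the implicit nature of the formula: the term $f(s,y(s))$ in the last integral depends on the unknown $y$, so the identity obtained is really an integral equation rather than an explicit solution. This is not an obstruction to the corollary itself (which only asserts the representation), but it is the reason why a separate fixed-point argument is needed later for existence and uniqueness; the weighted norm and estimates of $\mathfrak{E}_{\alpha-\beta,\alpha}^{h,\alpha}$ announced in the introduction will do that work.
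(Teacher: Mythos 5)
Your proposal is correct and matches the paper's approach: the paper gives no separate proof, stating only that the corollary follows by combining Theorems \ref{thm:2} and \ref{thm:11}, which is exactly the superposition of the homogeneous solution carrying the initial data with the zero-history particular solution that you carry out. Your additional remarks (the vanishing of the auxiliary data of the particular part at $-h^{+}$, and the implicit dependence of $f(s,y(s))$ on $y$) simply make explicit what the paper leaves tacit.
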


\section{Existence, uniqueness and Ulam-Hyers stability}

To introduce a fixed point problem associated with (\ref{le2}) we define an
integral operator $\mathfrak{F:}C\left(  \left[  0,T\right]  ,\mathbb{R}%
\right)  \rightarrow C\left(  \left[  0,T\right]  ,\mathbb{R}\right)  $ by%
\begin{align*}
\left(  \mathfrak{F}y\right)  \left(  t\right)   &  :=\mathfrak{E}%
_{\alpha-\beta,\alpha}^{h,\alpha}\left(  \lambda,\mu;t+h\right)
\ ^{RL}D_{-h^{+}}^{\alpha-1}\varphi\left(  -h\right)  +\mathfrak{E}%
_{\alpha-\beta,\alpha}^{h,\alpha-1}\left(  \lambda,\mu;t+h\right)  \left(
I_{-h^{+}}^{2-\alpha}\varphi\right)  \left(  -h^{+}\right) \\
&  +\int_{-h}^{t}\mathfrak{E}_{\alpha-\beta,\alpha}^{h,\alpha}\left(
\lambda,\mu;t-s\right)  \left(  \ ^{RL}D_{-h^{+}}^{\alpha}\varphi\left(
s\right)  -A\varphi\left(  s\right)  \right)  ds+\int_{0}^{t}\mathfrak{E}%
_{\alpha-\beta,\alpha}^{h,\alpha}\left(  \lambda,\mu;t-s\right)  f\left(
s,y\left(  s\right)  \right)  ds
\end{align*}

\begin{lemma}
\label{Lem:1}We have%
\begin{equation}
\left\vert \mathfrak{E}_{\alpha-\beta,\alpha}^{h,\alpha}\left(  \lambda
,\mu;t\right)  \right\vert \leq t^{\alpha-1}\exp\left(  \left\vert
\lambda\right\vert t^{\alpha-\beta}+\left\vert \mu\right\vert t^{\alpha
}\right)  . \label{es}%
\end{equation}

\end{lemma}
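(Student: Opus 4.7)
The plan is to estimate the defining double series of $\mathfrak{E}_{\alpha-\beta,\alpha}^{h,\alpha}(\lambda,\mu;t)$ term by term, and then to recognize the resulting series as the product expansion of an exponential. First I would apply the triangle inequality inside the double sum and use the elementary bound $(t-kh)^{k\alpha+n(\alpha-\beta)+\alpha-1}H(t-kh)\le t^{k\alpha+n(\alpha-\beta)+\alpha-1}$, which holds for $t\ge 0$ because the exponent is non-negative and $(t-kh)_{+}\le t$. Pulling out the common factor $t^{\alpha-1}$, this reduces the claim to
\[
|\mathfrak{E}_{\alpha-\beta,\alpha}^{h,\alpha}(\lambda,\mu;t)|\le t^{\alpha-1}\sum_{n,k\ge 0}\binom{n+k}{k}|\lambda|^{n}|\mu|^{k}\frac{t^{n(\alpha-\beta)+k\alpha}}{\Gamma\bigl(k\alpha+n(\alpha-\beta)+\alpha\bigr)}.
\]

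Next I would compare this termwise against the Taylor expansion
\[
\exp\bigl(|\lambda|t^{\alpha-\beta}+|\mu|t^{\alpha}\bigr)=\sum_{n,k\ge 0}\frac{|\lambda|^{n}|\mu|^{k}t^{n(\alpha-\beta)+k\alpha}}{n!\,k!}.
\]
Using $\binom{n+k}{k}=(n+k)!/(n!\,k!)$, the two series differ term by term only by the ratio $(n+k)!/\Gamma(k\alpha+n(\alpha-\beta)+\alpha)$, so it suffices to prove
\[
\Gamma\bigl(k\alpha+n(\alpha-\beta)+\alpha\bigr)\ge(n+k)!\qquad\text{for all }n,k\ge 0.
\]

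The key observation is that the hypotheses $\alpha>1$ and $\alpha-\beta>1$ from (\ref{le2}) give
\[
k\alpha+n(\alpha-\beta)+\alpha=(k+1)\alpha+n(\alpha-\beta)>(k+1)+n=n+k+1.
\]
When $n+k\ge 1$ the argument of $\Gamma$ therefore exceeds $2$, so by monotonicity of $\Gamma$ on $[2,\infty)$ together with $\Gamma(n+k+1)=(n+k)!$, the required inequality holds. Once the termwise bound is in place, summing over $n$ and $k$ factors the remaining double series into $\exp(|\lambda|t^{\alpha-\beta})\exp(|\mu|t^{\alpha})=\exp(|\lambda|t^{\alpha-\beta}+|\mu|t^{\alpha})$, which closes the argument. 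The only delicate point I anticipate is the corner $n=k=0$, where the bound reduces to $\Gamma(\alpha)\ge 1$; this is immediate at $\alpha=2$ but $\Gamma$ dips slightly below one on $(1,2)$, and the cleanest way to absorb the resulting constant is to enlarge the exponential majorant by the fixed factor $1/\Gamma(\alpha)$. Apart from this cosmetic issue, the Gamma-versus-factorial comparison — and in particular its dependence on the structural assumption $\alpha-\beta>1$ — is the one substantive step.
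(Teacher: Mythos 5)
Your proof is essentially identical to the paper's: bound $(t-kh)^{k\alpha+n(\alpha-\beta)+\alpha-1}H(t-kh)$ by $t^{k\alpha+n(\alpha-\beta)+\alpha-1}$, compare $\Gamma\left(k\alpha+n(\alpha-\beta)+\alpha\right)$ with $(n+k)!$ using $\alpha>1$ and $\alpha-\beta>1$, and resum the double series as $\exp\left(\left\vert\lambda\right\vert t^{\alpha-\beta}\right)\exp\left(\left\vert\mu\right\vert t^{\alpha}\right)$. Your remark about the corner $n=k=0$ is not merely cosmetic but identifies a genuine slip in the paper: inequality (\ref{es2}) fails there for $1<\alpha<2$, since $\Gamma(\alpha)<1=\Gamma(1)$, and indeed taking $\lambda=\mu=0$ gives $\mathfrak{E}_{\alpha-\beta,\alpha}^{h,\alpha}(0,0;t)=t^{\alpha-1}/\Gamma(\alpha)>t^{\alpha-1}$, so the lemma as stated needs the extra factor $1/\Gamma(\alpha)$ you propose (which is harmless for the subsequent fixed-point and stability arguments).
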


\begin{proof}
Firstly, we estimate $\mathfrak{E}_{\alpha-\beta,\alpha}^{h,\alpha}\left(
\lambda,\mu;t\right)  $ as follows.
\begin{align}
\left\vert \mathfrak{E}_{\alpha-\beta,\alpha}^{h,\alpha}\left(  \lambda
,\mu;t\right)  \right\vert  &  \leq%
{\displaystyle\sum_{n=0}^{\infty}}
{\displaystyle\sum_{k=0}^{\infty}}
\left(
\begin{array}
[c]{c}%
n+k\\
k
\end{array}
\right)  \left\vert \lambda\right\vert ^{n}\left\vert \mu\right\vert
^{k}\dfrac{\left(  t-kh\right)  ^{k\alpha+n\left(  \alpha-\beta\right)
+\alpha-1}}{\Gamma\left(  k\alpha+n\left(  \alpha-\beta\right)  +\alpha
\right)  }H\left(  t-kh\right) \nonumber\\
&  \leq%
{\displaystyle\sum_{n=0}^{\infty}}
{\displaystyle\sum_{k=0}^{\infty}}
\frac{\left(  n+k\right)  !}{n!k!}\left\vert \lambda\right\vert ^{n}\left\vert
\mu\right\vert ^{k}\dfrac{t^{k\alpha+n\left(  \alpha-\beta\right)  +\alpha-1}%
}{\Gamma\left(  k\alpha+n\left(  \alpha-\beta\right)  +\alpha\right)  }.
\label{es1}%
\end{align}
Since $1<\alpha<2,$ $\alpha-\beta>1$ we have%
\begin{equation}
\Gamma\left(  k\alpha+n\left(  \alpha-\beta\right)  +\alpha\right)
>\Gamma\left(  k+n+1\right)  . \label{es2}%
\end{equation}
From estimations (\ref{es1}) and (\ref{es2}), it follows that
\end{proof}

\begin{align*}
\left\vert \mathfrak{E}_{\alpha-\beta,\alpha}^{h,\alpha}\left(  \lambda
,\mu;t\right)  \right\vert  &  \leq%
{\displaystyle\sum_{n=0}^{\infty}}
{\displaystyle\sum_{k=0}^{\infty}}
\frac{\left(  n+k\right)  !}{n!k!}\left\vert \lambda\right\vert ^{n}\left\vert
\mu\right\vert ^{k}\dfrac{t^{k\alpha+n\left(  \alpha-\beta\right)  +\alpha-1}%
}{\left(  n+k\right)  !}\\
&  =%
{\displaystyle\sum_{n=0}^{\infty}}
{\displaystyle\sum_{k=0}^{\infty}}
\left\vert \lambda\right\vert ^{n}\left\vert \mu\right\vert ^{k}%
\dfrac{t^{k\alpha+n\left(  \alpha-\beta\right)  +\alpha-1}}{n!k!}\\
&  =t^{\alpha-1}%
{\displaystyle\sum_{n=0}^{\infty}}
\left\vert \lambda\right\vert ^{n}\dfrac{t^{n\left(  \alpha-\beta\right)  }%
}{n!}%
{\displaystyle\sum_{k=0}^{\infty}}
\left\vert \mu\right\vert ^{k}\dfrac{t^{k\alpha}}{k!}=t^{\alpha-1}\exp\left(
\left\vert \lambda\right\vert t^{\alpha-\beta}+\left\vert \mu\right\vert
t^{\alpha}\right)  .
\end{align*}

\begin{theorem}
\label{Thm:1}Let $f:\left[  0,T\right]  \times\mathbb{R}\rightarrow\mathbb{R}$
be a continuous function such that the following conditions hold:

\begin{enumerate}
\item[(A$_{1}$)] there exists $L_{f}>0$ such that%
\[
\left\vert f\left(  t,y\right)  -f\left(  t,z\right)  \right\vert \leq
L_{f}\left\vert y-z\right\vert ,\ \ \forall\left(  t,y\right)  ,\left(
t,z\right)  \in\left[  0,T\right]  \times\mathbb{R}.
\]
\newline Then the problem (\ref{le2}) has a unique solution in $C\left(
\left[  0,T\right]  ,\mathbb{R}\right)  .$
\end{enumerate}
\end{theorem}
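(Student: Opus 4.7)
The plan is to reformulate (\ref{le2}) as a fixed point equation $y=\mathfrak{F}y$ on $C([0,T],\mathbb{R})$ and apply the Banach contraction principle in a Bielecki-type weighted norm. The Corollary immediately preceding already identifies solutions of (\ref{le2}) with fixed points of $\mathfrak{F}$ (the prescribed history on $[-h,0]$ being absorbed into the explicit leading terms), so existence and uniqueness of a global solution reduce to contractivity of $\mathfrak{F}$.

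First I would verify that $\mathfrak{F}$ maps $C([0,T],\mathbb{R})$ into itself. The two leading delayed Mittag-Leffler terms are continuous in $t$ by absolute convergence of the defining series, the $\varphi$-integral is continuous by dominated convergence, and continuity of the nonlinear integral $t\mapsto\int_0^t \mathfrak{E}_{\alpha-\beta,\alpha}^{h,\alpha}(\lambda,\mu;t-s)f(s,y(s))\,ds$ follows from Lemma \ref{Lem:1} (which furnishes an integrable dominant for the kernel on $[0,T]$) together with continuity of $s\mapsto f(s,y(s))$ secured by (A$_1$) and $y\in C$.

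Next, equip $C([0,T],\mathbb{R})$ with the weighted norm
\[
\|y\|_w := \sup_{t\in[0,T]}|y(t)|\,e^{-\theta t},
\]
which is equivalent to $\|\cdot\|_\infty$ (hence complete) for any $\theta>0$ to be fixed below. The first three summands of $\mathfrak{F}$ do not depend on $y$, so for $y,z\in C([0,T],\mathbb{R})$ hypothesis (A$_1$) combined with Lemma \ref{Lem:1} yields
\[
|(\mathfrak{F}y)(t)-(\mathfrak{F}z)(t)|\le L_f\int_0^t (t-s)^{\alpha-1}\exp\bigl(|\lambda|(t-s)^{\alpha-\beta}+|\mu|(t-s)^{\alpha}\bigr)|y(s)-z(s)|\,ds.
\]
Using $(t-s)^{\alpha-\beta}\le T^{\alpha-\beta}$ and $(t-s)^{\alpha}\le T^{\alpha}$ on $[0,T]$ to pull the exponential factor outside as a constant $C_T:=\exp(|\lambda|T^{\alpha-\beta}+|\mu|T^{\alpha})$, then substituting $|y(s)-z(s)|\le\|y-z\|_w\,e^{\theta s}$ and changing variables $u=t-s$, one gets
\[
|(\mathfrak{F}y)(t)-(\mathfrak{F}z)(t)|\,e^{-\theta t}\le L_fC_T\|y-z\|_w\int_0^t u^{\alpha-1}e^{-\theta u}\,du\le L_fC_T\,\frac{\Gamma(\alpha)}{\theta^{\alpha}}\,\|y-z\|_w.
\]
Choosing $\theta$ large enough that $L_fC_T\Gamma(\alpha)\theta^{-\alpha}<1$ makes $\mathfrak{F}$ a strict contraction on $(C([0,T],\mathbb{R}),\|\cdot\|_w)$, and Banach's theorem produces a unique fixed point.

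The main obstacle is conceptual rather than computational: the exponential blow-up of the kernel in Lemma \ref{Lem:1} rules out any bound on $\mathfrak{E}_{\alpha-\beta,\alpha}^{h,\alpha}$ independent of $T$, so a crude Picard argument in the uniform norm would require a smallness assumption on $L_f$, $|\lambda|$, $|\mu|$, or $T$. The Bielecki weight $e^{-\theta t}$ neutralises this obstacle precisely because $\theta$ may be tuned after $T$, $\lambda$, $\mu$, and $L_f$ are fixed; all remaining arguments are routine bookkeeping around absolute convergence of the series defining $\mathfrak{E}_{\alpha-\beta,\alpha}^{h,\alpha}$.
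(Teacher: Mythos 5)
Your proposal is correct and follows essentially the same route as the paper: both reduce the problem to the fixed point equation $y=\mathfrak{F}y$, use Lemma \ref{Lem:1} to bound the kernel and pull the exponential factor out at $t=T$, and obtain a contraction in a Bielecki-type weighted norm whose parameter is tuned last. The only difference is cosmetic: you weight by $e^{-\theta t}$ and invoke $\int_{0}^{t}u^{\alpha-1}e^{-\theta u}\,du\leq\Gamma(\alpha)\theta^{-\alpha}$, while the paper weights by the Mittag-Leffler function $E_{\alpha}\left(\omega;t\right)$ and uses the identity $\frac{\omega}{\Gamma(\alpha)}\int_{0}^{t}(t-s)^{\alpha-1}E_{\alpha}\left(\omega;s\right)ds=E_{\alpha}\left(\omega;t\right)-1$; both yield a contraction constant that tends to zero as the weight parameter grows.
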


\begin{proof}
We will apply Contraction mapping principle to show that $\mathfrak{F}$ has a
unique fixed point. At first glance it seems natural to use the maximum norm
on $C\left(  \left[  0,T\right]  ,\mathbb{R}\right)  $, but this choice would
lead us only to a local solution defined on a subinterval of $\left[
0,T\right]  $. The trick is to use the weighted maximum norm%
\begin{equation}
\left\Vert y\right\Vert _{\omega}:=\max\left\{  \frac{\left\vert y\left(
t\right)  \right\vert }{E_{\alpha}\left(  \omega;t\right)  }:0\leq t\leq
T\right\}  \label{norm}%
\end{equation}
on $C\left(  \left[  0,T\right]  ,\mathbb{R}\right)  $. Observe that $C\left(
\left[  0,T\right]  ,\mathbb{R}\right)  $ is a Banach space with this norm
since it is equivalent to the maximum norm.

We now show that $\mathfrak{F}$ is a contraction on $\left(  C\left(  \left[
0,T\right]  ,\mathbb{R}\right)  ,\left\Vert \cdot\right\Vert _{\omega}\right)
$. To see this let $y,z\in C\left(  \left[  0,T\right]  ,\mathbb{R}\right)  $
and notice that%
\[
\left(  \mathfrak{F}y\right)  \left(  t\right)  -\left(  \mathfrak{F}z\right)
\left(  t\right)  =\int_{0}^{t}\mathfrak{E}_{\alpha-\beta,\alpha}^{h,\alpha
}\left(  \lambda,\mu;t-s\right)  \left(  f\left(  s,y\left(  s\right)
\right)  -f\left(  s,\left(  s\right)  \right)  \right)  ds.
\]
Thus for $t\in\left[  0,T\right]  ,$ from Lemma \ref{Lem:1} and Lipschitz
condition (A$_{1}$) it follows that
\begin{align*}
&  \frac{1}{E_{\alpha}\left(  \omega;t\right)  }\left\vert \int_{0}%
^{t}\mathfrak{E}_{\alpha-\beta,\alpha}^{h,\alpha}\left(  \lambda
,\mu;t-s\right)  \left(  f\left(  s,y\left(  s\right)  \right)  -f\left(
s,\left(  s\right)  \right)  \right)  ds\right\vert \\
&  \leq L_{f}\frac{1}{E_{\alpha}\left(  \omega;t\right)  }\int_{0}^{t}\left(
t-s\right)  ^{\alpha-1}\exp\left(  \left\vert \lambda\right\vert \left(
t-s\right)  ^{\alpha-\beta}\right)  \exp\left(  \left\vert \mu\right\vert
\left(  t-s\right)  ^{\alpha}\right)  \left\vert y\left(  s\right)  -z\left(
s\right)  \right\vert ds\\
&  \leq L_{f}\exp\left(  \left\vert \lambda\right\vert t^{\alpha-\beta
}+\left\vert \mu\right\vert t^{\alpha}\right)  \frac{1}{E_{\alpha}\left(
\omega;t\right)  }\int_{0}^{t}\left(  t-s\right)  ^{\alpha-1}\frac{E_{\alpha
}\left(  \omega;s\right)  }{E_{\alpha}\left(  \omega;s\right)  }\left\vert
y\left(  s\right)  -z\left(  s\right)  \right\vert ds\\
&  \leq L_{f}\exp\left(  \left\vert \lambda\right\vert t^{\alpha-\beta
}+\left\vert \mu\right\vert t^{\alpha}\right)  \frac{1}{E_{\alpha}\left(
\omega;t\right)  }\int_{0}^{t}\left(  t-s\right)  ^{\alpha-1}E_{\alpha}\left(
\omega;s\right)  ds\left\Vert y-z\right\Vert _{\omega}.
\end{align*}
On the other hand, it is known that%
\[
\frac{\omega}{\Gamma\left(  \alpha\right)  }\int_{0}^{t}\left(  t-s\right)
^{\alpha-1}E_{\alpha}\left(  \omega;s\right)  ds=E_{\alpha}\left(
\omega;t\right)  -1\leq E_{\alpha}\left(  \omega;t\right)  .
\]
Combining the last two inequalities we get
\begin{equation}
\frac{1}{E_{\alpha}\left(  \omega;t\right)  }\left\vert \int_{0}%
^{t}\mathfrak{E}_{\alpha-\beta,\alpha}^{h,\alpha}\left(  \lambda
,\mu;t-s\right)  \left(  f\left(  s,y\left(  s\right)  \right)  -f\left(
s,z\left(  s\right)  \right)  \right)  ds\right\vert \leq\frac{\Gamma\left(
\alpha\right)  }{\omega}L_{f}\exp\left(  \left\vert \lambda\right\vert
T^{\alpha-\beta}+\left\vert \mu\right\vert T^{\alpha}\right)  \left\Vert
y-z\right\Vert _{\omega}. \label{qw2}%
\end{equation}
Taking maximum over $\left[  0,T\right]  $ we get%
\[
\left\Vert \mathfrak{F}y-\mathfrak{F}z\right\Vert _{\omega}\leq\frac
{\Gamma\left(  \alpha\right)  }{\omega}L_{f}\exp\left(  \left\vert
\lambda\right\vert T^{\alpha-\beta}+\left\vert \mu\right\vert T^{\alpha
}\right)  \left\Vert y-z\right\Vert _{\omega}.
\]
Choose $\omega>0$ so that $\omega>L_{f}\Gamma\left(  \alpha\right)
\exp\left(  \left\vert \lambda\right\vert T^{\alpha-\beta}+\left\vert
\mu\right\vert T^{\alpha}\right)  .\ $So $\mathfrak{F}$ is a contraction. Thus
by the Banach fixed point theorem has $\mathfrak{F}$ a unique fixed point in
$C\left(  \left[  0,T\right]  ,\mathbb{R}\right)  $.
\end{proof}

Next, we are going to discuss Ulam--Hyers stability of the equation
(\ref{le2}) on the time interval $\left[  0,T\right]  $.

Let $\varepsilon>0$.. Consider the equation (\ref{le2}) and the inequality%
\begin{equation}
\left\vert ^{RL}D_{-h^{+}}^{\alpha}x\left(  t\right)  -\lambda\ ^{RL}%
D_{-h^{+}}^{\beta}x\left(  t\right)  -\mu x\left(  t-h\right)  -f\left(
t,x\left(  t\right)  \right)  \right\vert \leq\varepsilon,\ \ \ t\in\left[
0,T\right]  . \label{ul1}%
\end{equation}

\begin{definition}
We say that the equation (\ref{le2}) is Ulam--Hyers stable if there exists
$c>0$ such that for each $\varepsilon>0$ and for each solution $x\in C\left(
\left[  0,T\right]  ,\mathbb{R}\right)  $ of the inequality (\ref{ul1}) there
exists a solution $y\in C\left(  \left[  0,T\right]  ,\mathbb{R}\right)  $ of
the equation (\ref{le2}) with%
\[
\left\Vert y-x\right\Vert _{\omega}<c\varepsilon.
\]

\end{definition}

Let%
\[
g\left(  t\right)  :=\ ^{RL}D_{-h^{+}}^{\alpha}x\left(  t\right)
-\lambda\ ^{RL}D_{-h^{+}}^{\beta}x\left(  t\right)  -\mu y\left(  t-h\right)
-f\left(  t,x\left(  t\right)  \right)  .
\]
The solution of
\[
^{RL}D_{-h^{+}}^{\alpha}x\left(  t\right)  -\lambda\ ^{RL}D_{-h^{+}}^{\beta
}x\left(  t\right)  -\mu y\left(  t-h\right)  =f\left(  t,x\left(  t\right)
\right)  +g\left(  t\right)  ,
\]
can be represented by%
\[
x\left(  t\right)  =\left(  \mathfrak{F}x\right)  \left(  t\right)  +\int
_{0}^{t}\mathfrak{E}_{\alpha-\beta,\alpha}^{h,\alpha}\left(  \lambda
,\mu;t-s\right)  g\left(  s\right)  ds,\ \ \ \ t\in\left[  0,T\right]  .
\]
Then we have the following estimation.%
\begin{equation}
\left\vert x\left(  t\right)  -\left(  \mathfrak{F}x\right)  \left(  t\right)
\right\vert \leq\int_{0}^{t}\left\vert \mathfrak{E}_{\alpha-\beta,\alpha
}^{h,\alpha}\left(  \lambda,\mu;t-s\right)  \right\vert \left\vert g\left(
s\right)  \right\vert ds\leq\varepsilon T^{\alpha-1}\exp\left(  \left\vert
\lambda\right\vert T^{\alpha-\beta}+\left\vert \mu\right\vert T^{\alpha
}\right)  ,\ \ t\in\left[  0,T\right]  . \label{ul2}%
\end{equation}
Now we are ready to state our Ulam--Hyers stability result.

\begin{theorem}
Assume that (A$_{1}$) is satisfied. Then the equation (\ref{le2}) is
Ulam--Hyers stable on $\left[  0,T\right]  $.
\end{theorem}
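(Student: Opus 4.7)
The plan is to exploit the contraction structure already established in the proof of Theorem \ref{Thm:1}, together with the key estimate (\ref{ul2}), and then combine them via a standard Ulam--Hyers comparison argument. Let $x \in C([0,T],\mathbb{R})$ be any solution of the inequality (\ref{ul1}), and let $y \in C([0,T],\mathbb{R})$ denote the unique fixed point of $\mathfrak{F}$ given by Theorem \ref{Thm:1}, so that $y$ is a solution of (\ref{le2}).

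First I would fix $\omega > 0$ large enough that
\[
K := \frac{\Gamma(\alpha)}{\omega} L_{f} \exp\bigl(|\lambda| T^{\alpha-\beta} + |\mu| T^{\alpha}\bigr) < 1,
\]
exactly as in Theorem \ref{Thm:1}, so $\mathfrak{F}$ is a $K$-contraction in the $\|\cdot\|_\omega$ norm. Next, I would convert the pointwise bound (\ref{ul2}) into a bound in the weighted norm. Since $E_{\alpha}(\omega;t) \geq E_{\alpha}(\omega;0) = 1$ for all $t \in [0,T]$ and $\omega > 0$, dividing (\ref{ul2}) by $E_{\alpha}(\omega;t)$ and taking the maximum gives
\[
\|\mathfrak{F}x - x\|_{\omega} \leq \varepsilon T^{\alpha-1} \exp\bigl(|\lambda| T^{\alpha-\beta} + |\mu| T^{\alpha}\bigr).
\]

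The last step is the triangle inequality with the fixed point identity $y = \mathfrak{F}y$:
\[
\|y - x\|_{\omega} \leq \|\mathfrak{F}y - \mathfrak{F}x\|_{\omega} + \|\mathfrak{F}x - x\|_{\omega} \leq K \|y-x\|_{\omega} + \varepsilon T^{\alpha-1} \exp\bigl(|\lambda| T^{\alpha-\beta} + |\mu| T^{\alpha}\bigr).
\]
Since $K < 1$, rearranging yields $\|y - x\|_{\omega} \leq c\varepsilon$ with
\[
c := \frac{T^{\alpha-1} \exp\bigl(|\lambda| T^{\alpha-\beta} + |\mu| T^{\alpha}\bigr)}{1 - K},
\]
which is independent of $\varepsilon$ and $x$, establishing Ulam--Hyers stability.

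There is no real obstacle here once (\ref{ul2}) and the contraction estimate from Theorem \ref{Thm:1} are in hand; the only minor point requiring care is ensuring that the pointwise inequality (\ref{ul2}) transfers cleanly to the weighted norm, which is why I rely on the elementary bound $E_{\alpha}(\omega;t) \geq 1$ on $[0,T]$. The whole argument is the classical reduction of Ulam--Hyers stability to a Picard-type contraction setup.
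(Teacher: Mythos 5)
Your proposal is correct and follows essentially the same route as the paper: both combine the contraction estimate (\ref{qw2}) from Theorem \ref{Thm:1} with the perturbation bound (\ref{ul2}) via the representation $x=\mathfrak{F}x+\int_{0}^{t}\mathfrak{E}_{\alpha-\beta,\alpha}^{h,\alpha}\left(\lambda,\mu;t-s\right)g\left(s\right)ds$ and then absorb the $K\left\Vert y-x\right\Vert _{\omega}$ term. Your explicit remark that $E_{\alpha}\left(\omega;t\right)\geq1$ is needed to pass from the pointwise bound (\ref{ul2}) to the weighted norm is a small clarification the paper leaves implicit, but the argument is otherwise identical.
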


\begin{proof}
Let $x\in C\left(  \left[  0,T\right]  ,\mathbb{R}\right)  $ be a solution of
the inequality (\ref{ul1}) and let $y$ be a unique solution of the Cauchy
problem (\ref{le2}), that is,%
\[
y\left(  t\right)  =\left(  \mathfrak{F}y\right)  \left(  t\right)
,\ \ t\in\left[  0,T\right]  .
\]
By using estimation (\ref{qw2}) and inequality (\ref{ul2}), we have%
\begin{align*}
&  \frac{1}{E_{\alpha}\left(  \omega;t\right)  }\left\vert y\left(  t\right)
-x\left(  t\right)  \right\vert \leq\frac{1}{E_{\alpha}\left(  \omega
;t\right)  }\left\vert \left(  \mathfrak{F}y\right)  \left(  t\right)
-\left(  \mathfrak{F}x\right)  \left(  t\right)  -\int_{0}^{t}\mathfrak{E}%
_{\alpha-\beta,\alpha}^{h,\alpha}\left(  \lambda,\mu;t-s\right)  g\left(
s\right)  ds\right\vert \\
&  \leq\frac{\Gamma\left(  \alpha\right)  }{\omega}L_{f}\exp\left(  \left\vert
\lambda\right\vert T^{\alpha-\beta}+\left\vert \mu\right\vert T^{\alpha
}\right)  \left\Vert y-z\right\Vert _{\omega}+\varepsilon T^{\alpha-1}%
\exp\left(  \left\vert \lambda\right\vert T^{\alpha-\beta}+\left\vert
\mu\right\vert T^{\alpha}\right)  .
\end{align*}
By choosing $\omega>L_{f}\Gamma\left(  \alpha\right)  \exp\left(  \left\vert
\lambda\right\vert T^{\alpha-\beta}+\left\vert \mu\right\vert T^{\alpha
}\right)  $ which yields that%
\[
\left\Vert y-z\right\Vert _{\omega}\leq\varepsilon\frac{T^{\alpha-1}%
\exp\left(  \left\vert \lambda\right\vert T^{\alpha-\beta}+\left\vert
\mu\right\vert T^{\alpha}\right)  }{1-\frac{\Gamma\left(  \alpha\right)
}{\omega}L_{f}\exp\left(  \left\vert \lambda\right\vert T^{\alpha-\beta
}+\left\vert \mu\right\vert T^{\alpha}\right)  }.
\]
The proof is completed.
\end{proof}


\begin{thebibliography}{99}                                                                                               %


\bibitem {kilbas}Kilbas AA , Srivastava HM , Trujillo JJ . Theory and
applications of fractional differential equations. Amsterdam: Elsevier; 2006 .

\bibitem {li1}M. Li, J. Wang, Exploring delayed Mittag-Leffler type matrix
functions to study finite time stability of fractional delay differential
equations, Appl. Math. Comput. 324 (2018) 254--265.

\bibitem {1}Beck C , Roepstorff G . From dynamical systems to the langevin
equation. Phys A 1987;145:1--14 .

\bibitem {2}Coffey WT , Kalmykov YP , Waldron JT . The langevin equation.
second ed. Singapore: World Scientific; 2004 .

\bibitem {3}Klages R , Radons G , Sokolov IM . Anomalous transport: fundations
and applications. Weinheim: Wiley-VCH; 2008 .

\bibitem {9}Wang JR , Li X . Ulam--Hyers stability of fractional langevin
equations. Appl Math Comput 2015;258:72--83 .

\bibitem {10}Chen W , Ye L , Sun H . Fractional diffusion equations by the
kansa method. Comput Math Appl 2010;59:1614--20 .

\bibitem {11}Fu ZJ , Chen W , Ling L . Method of approximate particular
solutions for constant--and variable--order fractional diffusion models. Eng
Anal Bound Elem 2015;57:37--46 .

\bibitem {12}Fu ZJ , Chen W , Yang HT . Boundary particle method for laplace
transformed time fractional diffusion equations. J Comput Phys 2013;235:52--66 .,,

\bibitem {13}Yu T , Deng K , Luo M . Existence and uniqueness of solutions of
initial value problems for nonlinear langevin equation involving two
fractional orders. Commun Nonlinear Sci Numer Simulat 2014;19:1661--8 .

\bibitem {14}Lim SC , Li M , Teo LP . Langevin equation with two fractional
orders. Phys Lett A 2008;372:6309--20 .

\bibitem {15}Lim SC , Teo LP . The fractional oscillator process with two
indices. J Phys A 2009;42:065208 .

\bibitem {16}Ahmad B , Nieto JJ . Solvability of nonlinear langevin equation
involving two fractional orders with dirichlet boundary conditions. Int J
Differ Equ 2010:164 94 86 .

\bibitem {17}Ahmad B , Nieto JJ , Alsaedi A , El-Shahed M . A study of
nonlinear langevin equation involving two fractional orders in different
intervals. Nonlinear Anal RWA 2012;13:599--606 .

\bibitem {18}Wei Z , Li Q , Chea J . Initial value problems for fractional
differential equations involving Riemann--Liouville sequential fractional
derivative. J Math Anal Appl 2010;367:260--72 .
\end{thebibliography}
\end{document}